\newcommand{\msc}[2][2000]{%
  \let\@oldtitle\@title%
  \gdef\@title{\@oldtitle\footnotetext{#1 \emph{Mathematics subject
        classification.} #2}}%
}
\theoremstyle{plain}
\newtheorem{theorem}{Theorem}[section]
\newtheorem{assumption}[theorem]{Assumption}
\newtheorem{lemma}[theorem]{Lemma}
\newtheorem{corollary}[theorem]{Corollary}
\newtheorem{proposition}[theorem]{Proposition}
\theoremstyle{remark}
\newtheorem{remark}[theorem]{Remark}
\def\C{{\mathbb C}}
\def\R{{\mathbb R}}
\def\N{{\mathbb N}}
\def\O{\mathcal O}
\def\({\left(}
\def\){\right)}
\def\<{\left\langle}
\def\>{\right\rangle}
\def\le{\leqslant}
\def\ge{\geqslant}
\def\d{{\partial}}
\def\l{\lambda}
\def\si{{\sigma}}
\DeclareMathOperator{\RE}{Re}
\DeclareMathOperator{\IM}{Im}
\numberwithin{equation}{section}
\begin{document}

\title[Sharp weights in  NLS with potential]{Sharp weights in the
  Cauchy problem for
  nonlinear Schr\"odinger equations with potential}

\author[R. Carles]{R\'emi Carles}
\address{CNRS \& Univ. Montpellier\\Math\'ematiques
\\CC~051\\34095 Montpellier\\ France}
\email{Remi.Carles@math.cnrs.fr}

\begin{abstract}
  We review different properties related to the Cauchy problem for the
  (nonlinear) Schr\"odinger equation with a smooth potential. For
  energy-subcritical nonlinearities and at most quadratic potentials,
  we investigate the necessary decay in space in order for the Cauchy problem
  to be locally (and globally) well-posed. The characterization of
  the minimal decay is
  different in the case of  super-quadratic potentials. 
\end{abstract}
\thanks{This work was supported by the French ANR projects
  SchEq (ANR-12-JS01-0005-01) and BECASIM
  (ANR-12-MONU-0007-04).} 
\maketitle

\section{Generalities}
\label{sec:gen}

We consider the nonlinear Schr\"odinger equation
\begin{equation}
  \label{eq:nls}
  i\d_t u = Hu +\lambda |u|^{2\si}u,\quad (t,x)\in \R\times \R^d,\quad
  H=-\frac{1}{2}\Delta +V(x), 
\end{equation}
with $\lambda \in \R$, $\si>0$, 
for some smooth, real-valued potential $V$. In order to work at the
level of regularity $L^2$ or $H^1$, we suppose that the nonlinearity
is energy-subcritical, that is, $\si<2/(d-2)$ when $d\ge 3$ (see
e.g. \cite{CazCourant}). Such models appear in various fields of
Physics, such as laser propagation or Bose-Einstein Condensation (see
e.g. \cite{PiSt,Sulem}): for instance the potential $V$ can be 
quadratic (harmonic potential), linear (Stark effect), or
super-quadratic to ensure a strong confinement. 

\begin{assumption}\label{hyp:V}
  We suppose that $V$ is smooth and real-valued, $V\in
  C^\infty(\R^d;\R)$, and,
  \begin{itemize}
  \item Either $V$ is at most quadratic, $\d^\alpha V\in
    L^\infty(\R^d)$ for all $\alpha\in \N^d$ with $|\alpha|\ge 2$,
\item Or $V\ge 0$ is super-quadratic, in the sense that $V(x)\to
  \infty$ as $|x|\to \infty$ and there exists $m> 2$ such
  that
  \begin{equation*}
    |\d^\alpha V(x)|\le C_\alpha \<x\>^{m-|\alpha|},\quad \forall  \alpha \in \N^d.
  \end{equation*}
  \end{itemize}
\end{assumption}
Typically, the second case addresses potentials of the form
$V(x)=\<x\>^m$, $m>2$. Of course, the second case is formally
compatible with the first one, but should be thought of as rather
complementary. The borderline case corresponds to quadratic
potentials. The fact that quadratic potentials play a special role has
been known for many years: as established in \cite{Yajima96}, the
fundamental solution associated to the linear solution is smooth and
bounded except at the initial time if $V(x)=o(|x|^2)$ at infinity, while
at least if $d=1$, the fundamental solution associated to
super-quadratic potentials is nowhere $C^1$. In the limiting exactly quadratic case, the
fundamental solution has isolated singularities
(\cite{Zelditch83}). Finally, the linear Schr\"odinger flow is not
uniquely defined for non-positive super-quadratic potentials: if for
instance $d=1$ and $V(x)=-x^4$, then $H$ is not essentially
self-adjoint on $C_0^\infty(\R^d)$, due to infinite speed of propagation in
the classical trajectories (see e.g. \cite{Dunford}). From this point
of view, for smooth potentials, the assumption that there exist
$a,b>0$ such that $V(x)\ge 
-a|x|^2-b$ is sharp in order to ensure that $H$ is essentially
self-adjoint on $C_0^\infty(\R^d)$ (\cite{ReedSimon2}).
\smallbreak

In this note, we review known results concerning the Cauchy
problem for \eqref{eq:nls} with a level of regularity $H^1(\R^d)$. The
only new result is Theorem~\ref{theo:new}, which shows that the sharp
weights in space, at the level of regularity $H^1$, have different characterizations
for at most quadratic potentials and for super-quadratic
potentials. The sharpness of the required decay in space is presented
in Proposition~\ref{prop:lin}. 

\section{Strichartz estimates}
\label{sec:strichartz}

For at most quadratic potentials, a parametrix for $e^{-itH}$ has been
constructed in \cite{Fujiwara79} (see also \cite{Kit80}). We simply
emphasize that as a consequence, the propagator $e^{-itH}$, which is
unitary on $L^2(\R^d)$, satisfies the following  local dispersive
estimate: there exist $C,\delta>0$ such that
\begin{equation*}
  \|e^{-itH}\|_{L^1(\R^d)\to L^\infty(\R^d)}\le
  \frac{C}{|t|^{d/2}},\quad |t|\le \delta.
\end{equation*}
Recall that under the general Assumption~\ref{hyp:V}, such an
estimate is necessarily local in time, since typically in the case of
the harmonic potential, the flow $e^{-itH}$ is periodic in time. 
\smallbreak

The case of super-quadratic potentials has been addressed in
\cite{YajZha01,YajZha04}, and in \cite{Miz14} (see also
\cite{AnFa09,RoZu08,Tho10}). We summarize the main 
results on Strichartz estimates for $e^{-itH}$ in the following
statement.
\begin{proposition}[From \cite{Fujiwara79,Miz14}]\label{prop:strichartz}
Let $d\ge 1$ and $V$ satisfying Assumption~\ref{hyp:V}. Let $(q,r)$ be
an admissible pair, that is, satisfying
\begin{equation*}
  \frac{2}{q}= d\(\frac{1}{2}-\frac{1}{r}\),
\end{equation*}
with $2\le q,r\le \infty$ and $q>2$ is $d=2$. Let $T>0$. 
\begin{itemize}
\item If $V$ is at most quadratic, there exists $C=C(q,d,T)$ such that
  \begin{equation*}
    \|e^{-itH}f\|_{L^q([-T,T];L^r(\R^d))}\le C
    \|f\|_{L^2(\R^d)},\quad \forall f\in L^2(\R^d).
  \end{equation*}
\item If $d\ge 3$ and $V$ is  super-quadratic, there exists $C=C(q,d,T)$ such that
  \begin{equation*}
    \|e^{-itH}f\|_{L^q([-T,T];L^r(\R^d))}\le C
    \|f\|_{B^{\frac{1}{q}\(1-\frac{2}{m}\)}},\quad \forall f\in B^{\frac{1}{q}\(1-\frac{2}{m}\)},
  \end{equation*}
where for $s\ge 0$, 
\begin{equation*}
  B^s=\left\{ f\in L^2(\R^d)\ ;\
      \|f\|_{B^s}:= \|H^{s/2}f\|_{L^2(\R^d)}<\infty\right\}.
\end{equation*}
\end{itemize}
\end{proposition}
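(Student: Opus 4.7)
My plan is to treat both cases through the same abstract $TT^*$ recipe of Keel--Tao, plugging in different underlying dispersive estimates. For the at most quadratic case, the local dispersive bound $\|e^{-itH}\|_{L^1\to L^\infty}\le C|t|^{-d/2}$ for $|t|\le \delta$ stated just above (a consequence of Fujiwara's parametrix \cite{Fujiwara79}) combines with the $L^2$ unitarity of $e^{-itH}$ and the Keel--Tao machinery to yield Strichartz estimates on the short interval $[-\delta,\delta]$ for every admissible pair $(q,r)$. To pass from $[-\delta,\delta]$ to $[-T,T]$ I would partition the time axis into $\lceil T/\delta\rceil$ pieces of length at most $\delta$, apply the short-time estimate on each, and restart at the endpoints using $L^2$ conservation; this produces a constant $C(q,d,T)$ that grows at most polynomially in $T$.

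For the super-quadratic case the main difficulty is that $e^{-itH}$ no longer satisfies the free dispersive bound: a classical trajectory of the Hamiltonian $p(x,\xi)=|\xi|^2/2+V(x)$ can traverse any bounded region in arbitrarily short time as the energy grows, so dispersion at high frequency degrades. Following \cite{Miz14}, I would first perform a dyadic Littlewood--Paley decomposition adapted to $H$, writing $f=\sum_j \varphi(2^{-2j}H)f$ via functional calculus; next construct for each high-frequency block a semiclassical parametrix of Isozaki--Kitada or FBI type, exploiting the non-trapping property that follows from $V\to\infty$ together with the growth bounds of Assumption~\ref{hyp:V}; and finally derive a frequency-localized dispersive estimate valid only on the shorter window $|t|\le c\,2^{-j(1-2/m)}$, reflecting the traversal time of the classical flow at energy $2^{2j}$ across the support of a cut-off.

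Feeding each block into the $TT^*$ argument yields a block Strichartz inequality on that small window; covering $[-T,T]$ requires roughly $T\,2^{j(1-2/m)}$ such windows, so the patching loses a factor of $2^{j(1-2/m)/q}$ per dyadic frequency. A square-function estimate combined with the functional-calculus definition $\|f\|_{B^s}=\|H^{s/2}f\|_{L^2}$ absorbs this loss into the weight $(1-2/m)/q$ on the right-hand side, giving the stated inequality. The main obstacle is the second step: constructing and controlling the semiclassical parametrix in the presence of a growing potential on time scales that shrink with the frequency. For this I would rely on the microlocal analysis of \cite{Miz14}, together with the earlier works \cite{YajZha01,YajZha04}, where the uniform symbol estimates needed to globalize the parametrix in space are established; the role of the restriction $d\ge 3$ is to allow one to keep the full range of admissible pairs (including the endpoint $q=2$) when summing the dyadic blocks.
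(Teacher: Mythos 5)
This proposition is quoted from the literature: the paper gives no proof at all, only the citations \cite{Fujiwara79,Miz14} (together with \cite{YajZha01,YajZha04}), so your sketch can only be compared with the arguments of those references --- and it reconstructs them faithfully. For the at most quadratic case, the route local dispersive bound (from Fujiwara's parametrix) plus $L^2$-unitarity plus the Keel--Tao $TT^*$ machinery, followed by patching $O(T/\delta)$ subintervals, is exactly how the first item is obtained, with constant growing like $(T/\delta)^{1/q}$. For the super-quadratic case, the mechanism you describe --- a Littlewood--Paley decomposition in the functional calculus of $H$, a frequency-localized dispersive estimate on windows of length comparable to $\lambda^{-(1/2-1/m)}$ at energy $\lambda=2^{2j}$, and the loss of $\frac{1}{q}\bigl(1-\frac{2}{m}\bigr)$ powers of $H^{1/2}$ coming from patching $\sim T\lambda^{1/2-1/m}$ such windows and resumming via a square function --- is precisely the scaling structure of \cite{YajZha04,Miz14}, and it correctly reproduces the stated exponent. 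Two caveats. First, a confining potential is \emph{trapping} in the standard sense (every classical orbit is bounded and recurrent), so the phrase ``non-trapping property that follows from $V\to\infty$'' is not right; what the parametrix actually exploits is that the classical return time at energy $\lambda$ is bounded below by $c\,\lambda^{-(1/2-1/m)}$, so no refocusing occurs within the short window --- your own remark about the traversal time is the correct heuristic, and that is the statement you should keep. Second, the hypothesis $d\ge 3$ in the second item is not there to secure the Keel--Tao endpoint when summing blocks; as the paper explains right after the proposition, the cases $d=1$ and $d=2$ are simply stated in a different form in \cite{YajZha01} and \cite{Miz14} and are omitted for concision. Neither caveat affects the overall correctness of the outline, whose hard analytic content (the construction and control of the semiclassical parametrix for a growing potential) is, appropriately for a quoted result, deferred to the references.
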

We refer to \cite{YajZha01} and \cite{Miz14} for local in time
Strichartz estimates with super-quadratic potentials in the case $d=1$
and $d=2$, respectively. We omit them for the sake of
concision. Similar estimates are available for retarded
terms, which appear in the Duhamel's formula associated to
inhomogeneous Schr\"odinger equations. In the same fashion as above, 
these estimates are necessarily local in time without extra
assumption on $V$, since $H$ may possess eigenvalues (this is 
the case if $V\to +\infty$ as $|x|\to \infty$). 

The following equivalence of norms is established, in \cite{YajZha04}
when $V$ grows like $\<x\>^m$ as $|x|\to \infty$, and in
 \cite{BCM08}  under  assumptions
on $V$ which are weaker than in Assumption~\ref{hyp:V},
\begin{equation}\label{eq:equiv}
  \|H^{s/2}f\|_{L^2(\R^d)} \approx \|f\|_{H^s(\R^d)} +
  \|V^{s/2}f\|_{L^2(\R^d)}, \quad s\ge 0. 
\end{equation}

 As shown by the second case of Proposition~\ref{prop:strichartz}, a loss of regularity
must be expected for super-quadratic potentials.  reminiscent of what
happens on compact manifolds without boundary,
\cite{Bou93,BGT}. Formally, as $k$ ranges from $2$ to $\infty$, the
loss of regularity varies between $0$ and $1/q$ derivative, this
limiting case corresponding to the estimate established in \cite{BGT}
for general compact manifolds. This is in agreement with the property,
used in Physics, that for $V(x)=\<x\>^m$, the larger the
$m$, the more confining $H$. 
\section{Nonlinear Cauchy problem}
\label{sec:cauchy}

Formally, \eqref{eq:nls} enjoys the conservations of mass and energy:
\begin{equation*}
  \frac{d}{dt}\|u(t)\|_{L^2(\R^d)}^2=
\frac{d}{dt}\(\<Hu,u\>
+\frac{\l}{\si+1}\|u(t)\|_{L^{2\si+2}}^{2\si+2}\)=0. 
\end{equation*}

For $V\ge 0$ and at most quadratic, a local in time solution to
\eqref{eq:nls} was constructed by Oh \cite{Oh} with data in $\sqrt{
  H}$. We emphasize that the 
proof there does not rely on a fixed point argument, but on an
approximation procedure, as in \cite{GV79Cauchy}. In particular, it is
not necessary to understand the action of the
pseudo-differential operator $\sqrt H$ on the nonlinear term
$|u|^{2\si}u$. The case of the focusing,
$L^2$-subcritical nonlinearity considered in \cite{Oh} can easily be
generalized in view of the known results for $V=0$. 
\begin{theorem}[From \cite{Oh}]\label{theo:oh}
  Let $V\ge 0$ be at most quadratic, and $u_0\in \sqrt{H}=B^1$. 
  \begin{itemize}
  \item There exists a unique solution $u\in C([-T,T];\sqrt H)\cap
    L^{\frac{4\si+4}{d\si}}([-T,T];L^{2\si+2}(\R^d))$ to 
    \eqref{eq:nls}, with initial datum $u_0$, for some $T>0$ depending
    on $\|u_0\|_{B^1}$. 
\item This solution is global in time, $u\in C(\R;\sqrt H)\cap
  L^{\frac{4\si+4}{d\si}}_{\rm loc}(\R;L^{2\si+2}(\R^d))$, in either
  of the following cases:
  \begin{itemize}
  \item $\si<2/d$,
\item $\si\ge 2/d$ and $\l\ge 0$. 
  \end{itemize}
  \end{itemize}
\end{theorem}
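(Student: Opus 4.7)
The plan is to combine the first case of Proposition~\ref{prop:strichartz} with conservation of mass and energy, via an approximation procedure as in \cite{GV79Cauchy,Oh}. Fix the admissible pair $(q,r) = (\tfrac{4\sigma+4}{d\sigma}, 2\sigma+2)$: the energy-subcritical assumption $\sigma<2/(d-2)$ forces $r<2d/(d-2)$, so the Sobolev embedding $H^1\hookrightarrow L^{2\sigma+2}$, combined with \eqref{eq:equiv} for $s=1$, yields $B^1\hookrightarrow L^{2\sigma+2}$.

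For local existence, a direct Picard iteration at the $B^1$ level is unattractive: it would require controlling $H^{1/2}(|u|^{2\si}u)$, and the nonlocal operator $H^{1/2}$ does not interact simply with a pointwise nonlinearity. I would instead regularize the datum, $u_0^\eps = (1+\eps H)^{-1}u_0\in D(H)$, and solve the regularized Cauchy problem classically in $D(H)$, where the nonlinearity is locally Lipschitz so the standard contraction scheme applies. Mass and energy conservation along the smooth solution $u^\eps$ provide a bound on $\|u^\eps(t)\|_{B^1}$ uniform in $\eps$ on a time interval depending only on $\|u_0\|_{B^1}$; the complementary bound in $L^q_T L^r$ comes from Strichartz and H\"older via the bootstrap
\[
\|u^\eps\|_{L^q_T L^r}\le C\|u_0^\eps\|_{L^2} + CT^\theta \|u^\eps\|_{L^\infty_T B^1}^{2\sigma}\|u^\eps\|_{L^q_T L^r}
\]
(using $B^1\hookrightarrow L^{2\sigma+2}$ to estimate the nonlinearity). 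Weak-$*$ compactness in $L^\infty_T B^1$ plus Aubin--Lions strong convergence in $L^q_T L^r$ produce, in the limit $\eps\to 0$, a solution $u\in C([-T,T];B^1)\cap L^q_T L^r$ of \eqref{eq:nls}; uniqueness in the larger class $C([-T,T];L^2)\cap L^q_T L^r$ is classical.

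For global existence it suffices to bound $\|u(t)\|_{B^1}$ a priori on its existence interval and invoke the blow-up alternative of the local theory. By definition $\|u\|_{B^1}^2=\<Hu,u\>$, so conservation of energy reads
\[
\|u(t)\|_{B^1}^2 = E(u_0) - \frac{\lambda}{\sigma+1}\|u(t)\|_{L^{2\sigma+2}}^{2\sigma+2}.
\]
If $\lambda\ge 0$ the nonlinear term carries the favourable sign and $\|u(t)\|_{B^1}^2\le E(u_0)$ closes at once. If $\lambda<0$ and $\sigma<2/d$, the Gagliardo--Nirenberg inequality
\[
\|u\|_{L^{2\sigma+2}}^{2\sigma+2}\le C\|u\|_{L^2}^{2\sigma+2-d\sigma}\|\nabla u\|_{L^2}^{d\sigma}
\]
has exponent $d\sigma<2$ on $\|\nabla u\|_{L^2}$. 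Because $V\ge 0$ we have $\|\nabla u\|_{L^2}^2\le 2\<Hu,u\>=2\|u\|_{B^1}^2$, so mass conservation together with Young's inequality absorb the negative nonlinear contribution into a fraction of $\|u\|_{B^1}^2$, yielding the required uniform bound.

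The main obstacle is the local step: the incompatibility of $H^{1/2}$ with the pointwise nonlinearity rules out a direct fixed-point argument at $B^1$ regularity and forces the approximation detour. The remaining ingredients --- justification of the energy identity along the regularized flow, the compactness passage to the limit, and the continuation criterion --- are routine once the uniform $B^1$ bound is established.
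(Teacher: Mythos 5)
The paper does not actually prove Theorem~\ref{theo:oh}: it quotes it from \cite{Oh} and only records the method, namely an approximation/compactness procedure in the spirit of \cite{GV79Cauchy}, adopted precisely so that one never has to apply $H^{1/2}$ to the pointwise nonlinearity. Your plan reproduces that architecture (regularize, obtain uniform bounds, pass to the limit by compactness, uniqueness in $C_TL^2\cap L^q_TL^r$), and your globalization via energy conservation, $V\ge 0$, mass conservation and Gagliardo--Nirenberg is the standard argument the paper alludes to. So in structure you are aligned with the cited proof.

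One step of your local theory, however, does not work as written: you assert that mass and energy conservation give a uniform-in-$\eps$ bound on $\|u^\eps(t)\|_{B^1}$ on a time interval depending only on $\|u_0\|_{B^1}$. Since $V\ge 0$, this is correct when $\l\ge 0$ (the energy dominates $\<Hu^\eps,u^\eps\>$) and when $\l<0$, $\si<2/d$ (your Gagliardo--Nirenberg absorption) --- but in both of those cases the bound is in fact \emph{global}, which is exactly why \cite{Oh} restricts to the focusing $L^2$-subcritical case. In the remaining case $\l<0$, $\si\ge 2/d$, the conserved energy does not control $\<Hu^\eps,u^\eps\>$ from above, and no combination of conservation laws yields the local uniform bound; your Strichartz bootstrap for $\|u^\eps\|_{L^q_TL^r}$ cannot substitute for it because it presupposes the $L^\infty_TB^1$ bound. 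One must run a genuinely local estimate on $\nabla u^\eps$ and on $u^\eps\nabla V$ (note $\|f\nabla V\|_{L^2}\lesssim\|\sqrt V f\|_{L^2}\le\|f\|_{B^1}$ by the Lemma), exploiting that the commutators of $i\d_t-H$ with these operators are of lower order for $V$ at most quadratic --- this is precisely the mechanism in the proof of Theorem~\ref{theo:new} --- or, equivalently, invoke Theorem~\ref{theo:new} on $\widetilde\Sigma\supset B^1$ and then propagate $\int V|u|^2$ by the identity $\frac{d}{dt}\int V|u|^2=-\IM\int\bar u\,\nabla V\cdot\nabla u$. Without such a step, your plan only establishes the first bullet of the theorem in the parameter ranges of the second. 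A minor additional point: solving the regularized problem ``classically in $D(H)$'' is immediate only for $d\le 3$, where $H^2\hookrightarrow L^\infty$; in higher dimensions one should regularize the nonlinearity as well, as in \cite{GV79Cauchy}.
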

The assumption on the sign of $V$ has been removed for initial
data in 
\begin{equation*}
  \Sigma = \left\{ f\in H^1(\R^d),\ \|f\|_\Sigma:=\|f\|_{H^1(\R^d)} +
    \|x f\|_{L^2(\R^d)}<\infty\right\}.
\end{equation*}
In \cite{Ca11}, the global Cauchy problem for \eqref{eq:nls} was
considered:
\begin{theorem}[From \cite{Ca11}]\label{theo:ca11}
  Let $V$ be at most quadratic, and $u_0\in \Sigma$. 
  \begin{itemize}
  \item There exists a unique solution $u\in C([-T,T];\Sigma
    )$ to
    \eqref{eq:nls}, with initial datum $u_0$, such that
    \begin{equation*}
      u,xu,\nabla u \in L^{\frac{4\si+4}{d\si}}([-T,T];L^{2\si+2}(\R^d)),
    \end{equation*}
for some $T>0$ depending
    on $\|u_0\|_{\Sigma}$. 
\item This solution is global in time, $u\in C(\R;\Sigma)$, $
      u,xu,\nabla u \in L^{\frac{4\si+4}{d\si}}_{\rm loc}(\R;L^{2\si+2}(\R^d)),$
in either
  of the following cases:
  \begin{itemize}
  \item $\si<2/d$,
\item $\si\ge 2/d$ and $\l\ge 0$. 
  \end{itemize}
  \end{itemize}
\end{theorem}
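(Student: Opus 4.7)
The strategy is to propagate the $\Sigma$ norm by treating $u$, $\nabla u$, and $xu$ as a coupled system of Schr\"odinger-type equations, closing bounds with the Strichartz estimates of Proposition~\ref{prop:strichartz} (first case, no loss). Since $[H,x_j]=-\d_j$ and $[H,\d_j]=-(\d_j V)\,\cdot$, the three unknowns satisfy
\begin{align*}
(i\d_t-H)u &= \l|u|^{2\si}u,\\
(i\d_t-H)(x_j u) &= \d_j u + \l|u|^{2\si}(x_j u),\\
(i\d_t-H)(\d_j u) &= (\d_j V)\,u + \l\,\d_j(|u|^{2\si}u).
\end{align*}
Under Assumption~\ref{hyp:V} (at most quadratic), $|\d_j V(x)|\le C\<x\>$, hence $\|(\d_j V)u\|_{L^2}\le C\|u\|_\Sigma$, and the linear source $\d_j u$ in the second equation is also bounded by $\|u\|_\Sigma$. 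The nonlinear sources are controlled pointwise by $|u|^{2\si}|xu|$ in the second line and by $|u|^{2\si}(|\nabla u|+|u|)$ in the third, exactly as when $V=0$.

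For local well-posedness, with the admissible pair $(q,r)=(\tfrac{4\si+4}{d\si},2\si+2)$, I work in
\[
X_T=\left\{ u : u, xu, \nabla u \in C([-T,T];L^2(\R^d))\cap L^q([-T,T];L^r(\R^d))\right\}.
\]
Applying Strichartz to each Duhamel formula, using the endpoint pair $(\infty,2)$ for the linear sources $\d_j u$ and $(\d_j V)u$ and the dual pair $(q',r')$ for the nonlinear sources, the linear contributions yield a factor $T$ (from $L^1_T L^2$), while the nonlinear contributions produce a factor $\|u\|_{L^q_T L^r}^{2\si}$ by H\"older. A standard contraction on a small ball of $X_T$ then closes for $T$ small enough in terms of $\|u_0\|_\Sigma$, yielding the unique local solution.

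For global existence, mass is conserved, and since $|V(x)|\le C(1+|x|^2)$, $|\<Vu,u\>|\le C(\|u\|_{L^2}^2+\|xu\|_{L^2}^2)$; conservation of the energy, combined with Gagliardo--Nirenberg (when $\si<2/d$, since $d\si<2$ permits absorbing the possibly negative nonlinear term into the kinetic part) or directly when $\l\ge 0$, yields an a priori bound
\[
\|\nabla u(t)\|_{L^2}^2\le A+B\|xu(t)\|_{L^2}^2,
\]
with $A,B$ depending only on $\|u_0\|_\Sigma$. A direct computation from the equation for $xu$ gives $\tfrac{d}{dt}\|xu\|_{L^2}^2=2\IM\<\nabla u,xu\>\le 2\|\nabla u\|_{L^2}\|xu\|_{L^2}$, which combined with the previous estimate produces $\tfrac{d}{dt}\|xu\|_{L^2}^2\le C(1+\|xu\|_{L^2}^2)$. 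Gronwall then excludes finite-time blow-up of $\|u\|_\Sigma$, and iterating the local result yields the global solution together with the spacetime Strichartz bounds on arbitrary bounded intervals.

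The main technical obstacle is the coupling between $\|\nabla u\|_{L^2}$ and $\|xu\|_{L^2}$ introduced by $[H,\nabla]=-(\nabla V)\,\cdot$, which has no counterpart when $V=0$: in that case $\nabla$ commutes with the linear flow, and the Galilean operator $J(t)=x+it\nabla$ supplies a scalar conservation law. Here both weights must be propagated simultaneously, and closing the a priori estimate hinges on the coupling being linear with coefficients of strictly at-most-linear growth in $x$ --- precisely what the at-most-quadratic hypothesis on $V$ guarantees.
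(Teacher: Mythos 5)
Your proposal is correct and follows essentially the same route as the paper: the theorem is quoted from \cite{Ca11}, and the in-paper proof of the analogous Theorem~\ref{theo:new} uses exactly your strategy --- a fixed point via local Strichartz estimates on the commutator-closed system for $u$, $xu$, $\nabla u$ (with $[i\d_t-H,x]=\nabla$ and $[i\d_t-H,\nabla]=\nabla V$, the latter controlled by $\<x\>$ since $V$ is at most quadratic), followed by a Gronwall argument coupling the conserved energy with the virial-type identity for $\|xu\|_{L^2}^2$. The only cosmetic difference is that the paper packages the globalization into a single modified functional $\mathcal E_\l$ rather than invoking energy conservation and the $\|xu\|_{L^2}$ evolution separately, which is equivalent.
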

For $V\ge 0$ at most quadratic, \eqref{eq:equiv} shows that
$\Sigma\subset \sqrt H$,  and in the case where $V\ge
0$ is a non-degenerate quadratic form (e.g. isotropic harmonic
potential), $\Sigma = \sqrt H$. Therefore, the above result removes the sign
assumption in Theorem~\ref{theo:oh}, up to possibly requiring stronger decay in
space on the initial datum $u_0$. Note also that in the framework of
Theorem~\ref{theo:ca11}, even if $\lambda \ge 0$, the energy
functional 
\begin{align*}
  E &= \<Hu,u\>
+\frac{\l}{\si+1}\|u(t)\|_{L^{2\si+2}}^{2\si+2} \\
&= \frac{1}{2}\|\nabla
u(t)\|_{L^2}^2 +\int_{\R^d}V(x)|u(t,x)|^2dx
+\frac{\l}{\si+1}\|u(t)\|_{L^{2\si+2}}^{2\si+2} , 
\end{align*}
is not necessarily positive: it turns out that a negative (at most
quadratic) potential is not an 
obstruction to the existence of a solution to \eqref{eq:nls}. It may
actually prevent finite time blow-up (see e.g. \cite{Ca11} and
references therein). 
\smallbreak

We now turn to the case of a super-quadratic potential. 
It follows from the analysis in \cite{BCM08} that $B^s$ is a Banach
algebra for $s>d/2$ (see \eqref{eq:equiv}), and the following result
is proved:
\begin{theorem}[From \cite{BCM08}]\label{theo:BCM}
  Let $V$ be super-quadratic, $s>d/2$ and $u_0\in B^s$. Then for any
  $\si>0$ and $\l\in \C$, 
  there exist $T>0$ and a unique solution
  $u\in C([-T,T];B^s)$ to \eqref{eq:nls} with  initial datum $u_0$. 
\end{theorem}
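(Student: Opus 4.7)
The plan is a standard fixed-point argument in a ball of $C([-T,T];B^s)$, exploiting (i) the Banach algebra property of $B^s$ for $s>d/2$ quoted just above the statement, (ii) the norm equivalence~\eqref{eq:equiv}, and crucially (iii) the fact that $e^{-itH}$ is an \emph{isometry} of $B^s$. Indeed, since $e^{-itH}$ and $H^{s/2}$ are both Borel functions of the self-adjoint operator $H$, the functional calculus gives $[e^{-itH},H^{s/2}]=0$, so
\begin{equation*}
 \|e^{-itH}f\|_{B^s}=\|H^{s/2}e^{-itH}f\|_{L^2}=\|e^{-itH}H^{s/2}f\|_{L^2}=\|f\|_{B^s}.
\end{equation*}
In contrast with Proposition~\ref{prop:strichartz}, no regularity loss is paid at the linear level, because $B^s$ is precisely tuned to the dynamics of $H$.

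First I would rewrite \eqref{eq:nls} via Duhamel as the fixed-point equation
\begin{equation*}
 u=\Phi(u),\quad \Phi(u)(t):=e^{-itH}u_0-i\l\int_0^t e^{-i(t-\tau)H}\bigl(|u|^{2\si}u\bigr)(\tau)\,\dd\tau,
\end{equation*}
and study $\Phi$ on the complete metric space
\begin{equation*}
 X_T^R:=\{u\in C([-T,T];B^s)\ :\ \|u\|_{L^\infty([-T,T];B^s)}\le R\},\qquad R:=2\|u_0\|_{B^s},
\end{equation*}
with the metric inherited from $L^\infty([-T,T];B^s)$. Combining (iii) with Minkowski's inequality and the nonlinear bounds
\begin{equation*}
 \bigl\||u|^{2\si}u\bigr\|_{B^s}\le C(R),\qquad
 \bigl\||u|^{2\si}u-|v|^{2\si}v\bigr\|_{B^s}\le C(R)\,\|u-v\|_{B^s}
\end{equation*}
(valid for $\|u\|_{B^s},\|v\|_{B^s}\le R$) gives $\|\Phi(u)(t)\|_{B^s}\le\|u_0\|_{B^s}+TC(R)$ and the matching contraction estimate. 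Choosing $T=T(\|u_0\|_{B^s})$ small enough turns $\Phi$ into a strict contraction of $X_T^R$ into itself, and Banach's theorem produces the unique fixed point. Uniqueness in the whole class $C([-T,T];B^s)$ follows from a standard Gronwall argument applied to the difference of two solutions using the same nonlinear estimate.

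The main obstacle I expect is precisely the nonlinear estimate in $B^s$ when $2\si$ is not an integer. For $2\si\in\N$, iterating $\|fg\|_{B^s}\lesssim\|f\|_{B^s}\|g\|_{B^s}$ is immediate. For general $\si>0$, one needs a Moser/composition estimate for the only finitely differentiable map $z\mapsto|z|^{2\si}z$ acting on $B^s$. Via~\eqref{eq:equiv}, the $H^s$-part reduces to the classical Sobolev composition estimate, available because $s>d/2$ forces $H^s(\R^d)\hookrightarrow L^\infty(\R^d)$; the delicate ingredient is controlling $\|V^{s/2}(|u|^{2\si}u)\|_{L^2}$, with $V^{s/2}$ a non-local operator of super-quadratic growth. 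This is the functional-analytic input carried out in~\cite{BCM08}, and once it is granted the whole scheme goes through verbatim.
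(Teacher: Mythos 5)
Your proposal is correct and follows essentially the route the paper itself indicates: Theorem~\ref{theo:BCM} is quoted from \cite{BCM08} without proof, the only hint given being precisely the Banach algebra property of $B^s$ for $s>d/2$ that you use, combined with the (immediate, via functional calculus) fact that $e^{-itH}$ is an isometry of $B^s$. Your identification of the Moser-type composition estimate for $z\mapsto|z|^{2\si}z$ in $B^s$ --- in particular the weighted piece $\|V^{s/2}(|u|^{2\si}u)\|_{L^2}$ for non-integer $2\si$ --- as the one genuinely non-routine ingredient, deferred to \cite{BCM08}, is exactly right.
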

To decrease the regularity, Strichartz inequalities make it possible
to prove:
\begin{theorem}[From \cite{YajZha04} and \cite{Miz14}] \label{theo:yajima}
  Let  $V$ be super-quadratic and $s\ge 0$ with
  \begin{equation*}
    s>\frac{d}{2} -\frac{1}{\si}\(\frac{1}{2}+\frac{1}{m}\). 
  \end{equation*}
Let $u_0\in B^s$. There exist $T>0$ and a unique solution
$C([-T,T];B^s)$ to \eqref{eq:nls} with initial datum $u_0$. (Uniqueness is
actually granted in smaller spaces, involving a mixed time-space norm
which we omit 
  to simplify the presentation.) 
\end{theorem}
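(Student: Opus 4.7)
The plan is a contraction-mapping argument on the Duhamel formulation
\[
\Phi(u)(t) = e^{-itH}u_0 - i\lambda \int_0^t e^{-i(t-\tau)H}\bigl(|u|^{2\sigma}u\bigr)(\tau)\,d\tau,
\]
in a closed ball of a space
\[
Y_T = C([-T,T];B^s)\cap L^q([-T,T];L^r(\R^d)),
\]
where $(q,r)$ is an admissible pair to be chosen. Following the Cazenave--Weissler strategy, the metric is measured only in $L^qL^r$, while the $C([-T,T];B^s)$ part is handled through the boundedness of $e^{-itH}$ on $B^s$ together with the Strichartz control of the nonlinearity.

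The first step applies Proposition~\ref{prop:strichartz} to both pieces of $\Phi(u)$. The homogeneous piece obeys
\[
\|e^{-itH}u_0\|_{L^qL^r}\le C\|u_0\|_{B^{\frac{1}{q}(1-\frac{2}{m})}},
\]
so one needs $s\ge \frac{1}{q}(1-\frac{2}{m})$, and an analogous retarded version (applied to the integral with a second admissible pair $(\tilde q,\tilde r)$) produces a dual bound with an additional loss $\frac{1}{\tilde q}(1-\frac{2}{m})$. On the nonlinear side I would use plain H\"older in space and time,
\[
\bigl\||u|^{2\sigma}u\bigr\|_{L^{\tilde q'}L^{\tilde r'}}
\le T^{\theta}\|u\|_{L^\infty L^{\rho}}^{2\sigma}\|u\|_{L^qL^r},
\]
where $\rho$ and $\theta>0$ are fixed by the H\"older relations, and then control $\|u\|_{L^\infty L^\rho}$ by $\|u\|_{L^\infty B^s}$ via the Sobolev embedding $B^s\hookrightarrow L^\rho$, which, through the equivalence \eqref{eq:equiv}, reduces to the classical embedding $H^s\hookrightarrow L^\rho$.

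Collecting the three scaling constraints --- admissibility of $(q,r)$ and $(\tilde q,\tilde r)$, the Sobolev embedding $B^s\hookrightarrow L^\rho$, and the requirement $\theta>0$ so that a positive power of $T$ is gained in the H\"older in time --- and eliminating the auxiliary exponents, the range of allowed $s$ collapses to exactly
\[
s>\frac{d}{2}-\frac{1}{\sigma}\Bigl(\frac{1}{2}+\frac{1}{m}\Bigr),
\]
as stated. For such $s$ and $T$ small enough depending on $\|u_0\|_{B^s}$, $\Phi$ is a contraction on a small ball of $Y_T$, yielding the solution. Uniqueness in the full space $C([-T,T];B^s)$ then follows from Strichartz plus Gronwall (this is the mixed space-time norm alluded to in the parenthetical remark).

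The main obstacle is the combinatorial bookkeeping forced by the derivative loss $\frac{1}{q}(1-\frac{2}{m})$, which enters \emph{both} on the linear propagator (absorbed by the $B^s$-regularity of $u_0$) and, in dual form, on the retarded term (absorbed by moving derivatives onto the nonlinearity through the fractional chain rule). One must show that the chosen admissible pairs close the estimate precisely at the threshold $s>\tfrac{d}{2}-\tfrac{1}{\sigma}(\tfrac12+\tfrac{1}{m})$ and not at some strictly larger value; this typically requires exploiting a continuum of admissible pairs rather than a single fixed one, so that both the Sobolev embedding and the Strichartz loss can be balanced against the available regularity of $u_0$.
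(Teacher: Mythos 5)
The paper states Theorem~\ref{theo:yajima} as a result quoted from \cite{YajZha04} and \cite{Miz14} and supplies no proof of its own, so the only meaningful comparison is with the strategy of those references. Your outline does follow that strategy (a Cazenave--Weissler type contraction built on the lossy Strichartz estimates of Proposition~\ref{prop:strichartz}), but as written it has two genuine gaps. First, the space $Y_T=C([-T,T];B^s)\cap L^qL^r$ with the nonlinearity handled by plain H\"older and the embedding $B^s\hookrightarrow L^\rho$ closes an $L^qL^r$ bound but does not propagate the $B^s$ regularity of $\Phi(u)$: for that you must estimate $H^{s/2}\bigl(|u|^{2\sigma}u\bigr)$ in a dual Strichartz or $L^1_tB^s$ norm, which requires a fractional chain rule adapted to $H$ and, crucially, an $L^r$-based version of the equivalence of norms \eqref{eq:equiv} (which the paper states only for $L^2$), so that $H^{s/2}$ can be commuted through the propagator and then traded back for $\langle\nabla\rangle^s$ and $V^{s/2}$ acting on the nonlinearity. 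This is one of the technical cores of \cite{YajZha04} and \cite{BCM08}, not bookkeeping, and your sketch relegates it to a parenthetical mention of the fractional chain rule.

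Second, the assertion that eliminating the auxiliary exponents makes the admissible range collapse to exactly $s>\frac d2-\frac1\sigma\bigl(\frac12+\frac1m\bigr)$ is the entire quantitative content of the theorem, and you do not carry it out. Running the scheme in $C_TB^s\cap L^q_TL^\infty$ (Sobolev after Strichartz), admissibility gives $\frac dr=\frac d2-\frac2q$, so the linear estimate costs $s>\frac d2-\frac1q\bigl(1+\frac2m\bigr)$, while the H\"older-in-time gain forces $q>2\sigma$; letting $q\downarrow 2\sigma$ reproduces the stated threshold only when $2\sigma\ge 2$. For $\sigma<1$ the constraint $q\ge 2$ blocks this route and yields only the weaker condition $s>\frac d2-\frac12-\frac1m$, so covering the full stated range requires a different choice of pairs (e.g.\ $r=2\sigma+2$ with retarded dual-pair estimates and their own loss); this is exactly where your remark about exploiting a continuum of admissible pairs would have to be made precise rather than invoked. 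A minor further point: Proposition~\ref{prop:strichartz} as quoted covers only $d\ge3$ in the super-quadratic case, so for $d=1,2$ you must appeal to the variants from \cite{YajZha01} and \cite{Miz14} mentioned after it.
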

Here again, for $m=2$, the above condition on $s$ is the standard one in the
case without potential (\cite{CW90}), and letting $m\to \infty$, we
recover the condition established in \cite{BGT} on compact manifolds
without boundary. To work at the level of $H^1$-regularity ($s=1$),
the above condition reads
\begin{equation*}
  \si<\frac{m+2}{m(d-2)_+}. 
\end{equation*}
In
view of the conservation of mass and energy, Gagliardo--Nirenberg
inequality, Theorem~\ref{theo:BCM} and 
Theorem~\ref{theo:yajima}  ($d=1$), or Theorem~\ref{theo:yajima}  and
\cite{BGT} ($d=2,3$) imply:
\begin{corollary}
  Let $d\le 3$ and $V$ be super-quadratic.
If  $s\ge 1$ and  $u_0\in B^s$, then \eqref{eq:nls} has a unique,
global solution $u\in C(\R;B^s)$ with initial datum $u_0$, in either
of the following cases:
 \begin{itemize}
  \item $\si<2/d$ and $\l\in \R$,
\item $2/d\le \si< (m+2)/(m(d-2)_+)$ and $\l\ge 0$. 
  \end{itemize}
\end{corollary}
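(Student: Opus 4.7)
The plan is to combine the local existence results cited just above the corollary with an \emph{a priori} bound in $B^1$ coming from the conservation of mass and energy, and then to propagate higher regularity by the Strichartz estimates of Proposition~\ref{prop:strichartz}.

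First, local existence: when $d=1$ and $s\ge 1>d/2$, Theorem~\ref{theo:BCM} directly produces a unique maximal $B^s$-solution; when $d=2,3$, the two ranges $\si<2/d$ and $2/d\le \si<(m+2)/(m(d-2)_+)$ are exactly those that place $s=1$ above the threshold of Theorem~\ref{theo:yajima}, so a $B^1$-solution exists locally, and a standard Strichartz bootstrap combined with Moser-type estimates in the algebra $B^s$ (valid for $s>d/2$, via \eqref{eq:equiv}) propagates the full $B^s$-regularity. In either case there is a blow-up alternative: if the maximal existence time $T^*$ is finite then $\limsup_{t\uparrow T^*}\|u(t)\|_{B^1}=\infty$, so it suffices to bound $\|u(t)\|_{B^1}$ uniformly in time.

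Second, I would obtain the uniform $B^1$-bound from the conservation laws. Mass conservation gives $\|u(t)\|_{L^2}\equiv\|u_0\|_{L^2}$. If $\l\ge 0$, all three contributions to the energy
\[
E(u)=\tfrac{1}{2}\|\nabla u\|_{L^2}^2+\int_{\R^d}V(x)|u|^2\,dx+\tfrac{\l}{\si+1}\|u\|_{L^{2\si+2}}^{2\si+2}
\]
are nonnegative (using $V\ge 0$ in the super-quadratic case), so the identity $E(u(t))=E(u_0)$ bounds $\|\nabla u(t)\|_{L^2}^2+2\int V|u(t)|^2$, and \eqref{eq:equiv} turns this into a bound on $\|u(t)\|_{B^1}$. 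If instead $\si<2/d$ with $\l\in\R$, Gagliardo--Nirenberg
\[
\|u\|_{L^{2\si+2}}^{2\si+2}\le C\,\|u\|_{L^2}^{2\si+2-d\si}\,\|\nabla u\|_{L^2}^{d\si}
\]
has exponent $d\si<2$, so the possibly negative nonlinear term in $E$ is absorbed by Young's inequality into a small fraction of $\|\nabla u\|_{L^2}^2$ plus a constant depending only on $\|u_0\|_{L^2}$; conservation of $E$ then gives the same $B^1$-bound after a second use of \eqref{eq:equiv}.

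The main obstacle, to my mind, is not the $B^1$-bound itself --- which is a routine consequence of the conservation laws --- but the verification that the persistence of $B^s$-regularity for $s>1$ remains compatible with the derivative loss $\tfrac{1}{q}(1-\tfrac{2}{m})$ in the super-quadratic Strichartz estimates. I expect this to go through by a Gronwall iteration, on short time intervals whose length depends only on $\|u\|_{L^\infty_t B^1}$, using that commuting $H^{s/2}$ with $e^{-itH}$ is harmless and that the nonlinear term $|u|^{2\si}u$ is tame in $B^s$ via \eqref{eq:equiv} once $s>d/2$; for $1<s\le d/2$ one bootstraps up to an algebra level first, then re-descends.
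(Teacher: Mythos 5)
Your proposal is correct and follows essentially the same route the paper indicates in the one-line derivation preceding the corollary: local well-posedness from Theorems~\ref{theo:BCM} and~\ref{theo:yajima}, a uniform $B^1$ bound from conservation of mass and energy combined with Gagliardo--Nirenberg and $V\ge 0$ via \eqref{eq:equiv}, and persistence of higher $B^s$ regularity (for which the paper cites \cite{BGT} in dimensions $2$ and $3$). Since the paper supplies no further detail, your fleshed-out version matches its intended argument.
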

\begin{remark}[Higher dimensions] 
  A similar result is available in higher dimensions, but the
  discussion is a bit more involved, since for $d\ge 4$,  it may happen
  that 
  \begin{equation*}
    \frac{2}{d}>\frac{m+2}{m(d-2)}. 
  \end{equation*}
\end{remark}
\section{Sharp weight for at most quadratic potentials}
\label{sec:sharp}

As noticed in \cite{CaCauchy}, if $V$ is at most quadratic and $\nabla
V$ is bounded, then Theorem~\ref{theo:ca11} remains valid with
$\Sigma$ replaced by $H^1(\R^d)$ (and no property involving
$xu$). Note that typically if $V(x)=\<x\>$, this shows that the
assumptions on the space decay of the initial datum are sharp neither
in Theorem~\ref{theo:oh} nor in Theorem~\ref{theo:ca11}, when one
wants to deal with an $H^1$-regularity. More generally, set
\begin{equation*}
  \widetilde\Sigma =\{f\in H^1(\R^d)\ ;\  \|f\|_{\widetilde \Sigma}
  :=\|f\|_{H^1(\R^d)}+\|f\nabla V\|_{L^2(\R^d)}<\infty\}. 
\end{equation*}
Since $V$ is at most quadratic, we have $\Sigma\subset \widetilde
\Sigma$, and the inclusion is strict unless $V$ is quadratric (and
non-degenerate). Typically, when $\nabla V\in L^\infty$, we have
$\widetilde\Sigma =H^1(\R^d)$. When $V\ge 0$, we also have $\sqrt
H \subset \widetilde \Sigma$, from the following elementary result.
\begin{lemma}
  Let $f\in C^2(\R;\R)$ be such that $f\ge 0$ and $f''$ is bounded. Then 
  \begin{equation*}
    f'(x)^2\le  2\|f''\|_{L^\infty} f(x),\quad \forall x\in \R. 
  \end{equation*}
\end{lemma}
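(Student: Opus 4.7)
The plan is to exploit the non-negativity of $f$ via a Taylor expansion at an arbitrary point $x$, and then extract the inequality from the requirement that a certain quadratic polynomial in the Taylor variable be non-negative.

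More precisely, I would fix $x\in\R$ and write, for every $h\in\R$, the second-order Taylor formula with remainder: there exists $\xi$ between $x$ and $x+h$ such that
\begin{equation*}
  f(x+h) = f(x) + f'(x)h + \tfrac{1}{2}f''(\xi)h^2.
\end{equation*}
Using $f''(\xi)\le \|f''\|_{L^\infty}$ and the hypothesis $f(x+h)\ge 0$, I obtain the inequality
\begin{equation*}
  0 \le f(x) + f'(x)h + \tfrac{1}{2}\|f''\|_{L^\infty}h^2,\qquad \forall h\in \R.
\end{equation*}

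The right-hand side is a (possibly degenerate) quadratic polynomial in $h$ with non-negative leading coefficient, and the displayed inequality says it is non-negative for all $h\in\R$. Consequently its discriminant is non-positive:
\begin{equation*}
  f'(x)^2 - 2\|f''\|_{L^\infty}f(x) \le 0,
\end{equation*}
which is precisely the stated bound. The degenerate case $\|f''\|_{L^\infty}=0$ means $f$ is affine and non-negative on $\R$, hence constant, so $f'(x)=0$ and the inequality holds trivially.

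There is no real obstacle here; the only point to be careful about is the direction of the inequality in Taylor's remainder (one needs the upper bound on $f''$, not a lower bound) and the fact that the argument requires $h$ to range over all of $\R$, which is available since $f$ is defined on the whole real line.
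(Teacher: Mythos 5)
Your proof is correct and follows essentially the same route as the paper: a second-order Taylor expansion bounded by $\|f''\|_{L^\infty}$, followed by the non-positivity of the discriminant of the resulting quadratic in the increment. The only cosmetic difference is that you use the Lagrange form of the remainder where the paper uses the integral form, and you additionally (and harmlessly) treat the degenerate case $\|f''\|_{L^\infty}=0$ explicitly.
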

\begin{proof}
 Taylor's formula yields, for $x,y\in \R$,
 \begin{equation*}
   f(x+y) = f(x)+yf'(x) + y^2\int_0^1 (1-\theta)f''(x+\theta
   y)d\theta\le  f(x)+yf'(x) + \frac{y^2}{2}\|f''\|_{L^\infty} .
 \end{equation*}
Since by assumption $f(x+y)\ge 0$, the discriminant of
$f(x)+yf'(x) + \frac{y^2}{2}\|f''\|_{L^\infty}$, seen as a polynomial
in $y$, 
 must be non-positive, hence the result. 
\end{proof}
\begin{theorem}\label{theo:new}
  Let $V$ be at most quadratic, and $u_0\in \widetilde\Sigma$. 
  \begin{itemize}
  \item There exists a unique solution $u\in C([-T,T];\widetilde\Sigma
    )$ to
    \eqref{eq:nls}, with initial datum $u_0$, such that
    \begin{equation*}
      u,u\nabla V,\nabla u \in L^{\frac{4\si+4}{d\si}}([-T,T];L^{2\si+2}(\R^d)),
    \end{equation*}
for some $T>0$ depending
    on $\|u_0\|_{\widetilde\Sigma}$. 
\item This solution is global in time, $u\in C(\R;\widetilde\Sigma)$, $
      u,u\nabla V,\nabla u \in L^{\frac{4\si+4}{d\si}}_{\rm loc}(\R;L^{2\si+2}(\R^d)),$
in either
  of the following cases:
  \begin{itemize}
  \item $\si<2/d$,
\item $\si\ge 2/d$ and $\l\ge 0$. 
  \end{itemize}
  \end{itemize}
\end{theorem}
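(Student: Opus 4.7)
The plan is to adapt the strategy of Theorem~\ref{theo:ca11} from \cite{Ca11}, with the pointwise weight $x$ replaced by the vector-valued weight $\nabla V$. The crucial feature of an at most quadratic $V$ is that $\d^\alpha V\in L^\infty(\R^d)$ for every $|\alpha|\ge 2$, so $\nabla V$ commutes with $H$ up to a first-order differential operator with bounded coefficients: for $j\in\{1,\ldots,d\}$,
\begin{equation*}
[H,\d_j]u=-(\d_j V)\,u,\qquad
[H,(\d_j V)\,\cdot\,]u =-\nabla(\d_j V)\cdot \nabla u -\tfrac{1}{2}(\Delta \d_j V)\,u,
\end{equation*}
and the right-hand side of the second identity is controlled in $L^2$ by $\|u\|_{H^1}$, hence by $\|u\|_{\widetilde\Sigma}$.

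For the local step, differentiating \eqref{eq:nls} with respect to $\d_j$ and multiplying it by $\d_j V$ yields Schr\"odinger equations driven by $H$ for $\d_j u$ and for $(\d_j V)u$, whose source terms consist of (i) the commutators above and (ii) either $\d_j(\l|u|^{2\si}u)$, which is pointwise bounded by a multiple of $|u|^{2\si}|\nabla u|$, or $(\d_j V)\l|u|^{2\si}u=\l|u|^{2\si}\bigl((\d_j V)u\bigr)$. Using the Duhamel formula and the Strichartz estimates of Proposition~\ref{prop:strichartz} (first case, no loss of regularity) on the admissible pair $\bigl(\tfrac{4\si+4}{d\si},2\si+2\bigr)$, one then sets up a contraction on the triple $(u,\nabla u,u\nabla V)$ in the associated Strichartz space. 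The nonlinear estimates reduce to H\"older inequalities exactly as in the potential-free case (\cite{CazCourant}), and the commutator source terms close because they only involve the first two components of the triple.

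For the global step, mass is conserved, and a direct integration by parts yields
\begin{equation*}
  \frac{d}{dt}\|u(t)\nabla V\|_{L^2}^2 =\int_{\R^d}\nabla\bigl(|\nabla V|^2\bigr)\cdot \IM\bigl(\bar u\,\nabla u\bigr)\,dx.
\end{equation*}
Since $|\nabla(|\nabla V|^2)|\le 2\|\nabla^2V\|_{L^\infty}|\nabla V|$, one obtains $\bigl|\tfrac{d}{dt}\|u\nabla V\|_{L^2}\bigr|\le C\|\nabla u\|_{L^2}$; an analogous computation bounds $\bigl|\tfrac{d}{dt}\int V|u|^2\,dx\bigr|$ by $C\|u\nabla V\|_{L^2}\|\nabla u\|_{L^2}$. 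Combined with conservation of energy---using Gagliardo--Nirenberg in the $L^2$-subcritical case and the non-negativity of the nonlinear energy when $\l\ge 0$, exactly as in \cite{Ca11}---these identities close into a Gronwall system for $\|\nabla u\|_{L^2}^2$, $\|u\nabla V\|_{L^2}^2$ and $\int V|u|^2\,dx$, whose solutions grow at most polynomially on bounded time intervals. This prevents finite-time blow-up of the $\widetilde\Sigma$-norm and yields global existence. The main obstacle is exactly this coupling: unlike in the $\Sigma$-framework of Theorem~\ref{theo:ca11}, where $\|xu\|_{L^2}$ directly dominates $\bigl|\int V|u|^2\bigr|$ for at most quadratic $V$, here $\|u\nabla V\|_{L^2}$ does not control $\int V|u|^2$ in general (consider $V(x)=x$ on $\R$), so one must propagate the three weighted quantities together rather than estimate $\int V|u|^2$ statically.
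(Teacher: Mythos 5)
Your local step coincides with the paper's: the same commutator identities for $[H,\partial_j]$ and $[H,\partial_j V]$ (first--order operators with bounded coefficients, since $\partial^\alpha V\in L^\infty$ for $|\alpha|\ge 2$), and a fixed point for the triple $(u,\nabla u,u\nabla V)$ in the Strichartz space built on the admissible pair $\bigl(\tfrac{4\sigma+4}{d\sigma},2\sigma+2\bigr)$. (One wording slip: the commutator source in the equation for $\nabla u$ is $u\nabla V$ itself; the system closes because the sources are linear in all three components of the triple with bounded coefficients, not because they involve only two of them.)

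The global step, however, has a genuine gap. You invoke conservation of the energy and propose to propagate $\int V|u|^2$ as one of the unknowns of your Gronwall system. But for general $u_0\in\widetilde\Sigma$ neither the potential energy nor the total energy need be defined: $u_0\nabla V\in L^2$ gives no control on $\int V|u_0|^2$. Take $V(x)=x$ in dimension one, so that $\nabla V$ is bounded and $\widetilde\Sigma=H^1$, and $u_0\in H^1$ decaying like $|x|^{-1}$: then $\int V|u_0|^2$ diverges. This is precisely the regime the theorem is designed for (data with less decay than $\Sigma$), so there is no initial value from which to ``propagate'' $\int V|u|^2$, and conservation of energy cannot be used as stated. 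The repair is exactly what the paper does: never use $\int V|u|^2$ itself, only the identity $\frac{d}{dt}\bigl(\tfrac12\|\nabla u\|_{L^2}^2+\tfrac{\lambda}{\sigma+1}\|u\|_{L^{2\sigma+2}}^{2\sigma+2}\bigr)=-\IM\int\bar u\,\nabla V\cdot\nabla u\,dx$, which is the formal energy conservation with the (possibly infinite) potential term eliminated, justified by a regularization argument, and whose right-hand side is bounded by $\|u\nabla V\|_{L^2}\|\nabla u\|_{L^2}$. Combined with your (correct) bound on $\frac{d}{dt}\|u\nabla V\|_{L^2}^2$, this gives $\frac{d}{dt}\mathcal E_\lambda\lesssim \mathcal E_0$ for $\mathcal E_\lambda=\tfrac12\|\nabla u\|_{L^2}^2+\tfrac{\lambda}{\sigma+1}\|u\|_{L^{2\sigma+2}}^{2\sigma+2}+\|u\nabla V\|_{L^2}^2$, and Gronwall (which yields an exponential, not polynomial, bound on bounded intervals --- a further small slip in your write-up) concludes when $\lambda\ge 0$, with Gagliardo--Nirenberg or the $L^2$ theory handling the focusing mass-subcritical case. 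You computed both differential identities the paper uses; the flaw lies only in how you combine them, but as written your argument does not cover general $u_0\in\widetilde\Sigma$.
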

\begin{proof}
  We sketch the main steps of the proof, which follow classical
  arguments. Duhamel's formula for \eqref{eq:nls}
with initial datum $u_0$ reads
\begin{equation*}
  u(t) = e^{-itH}u_0 -i\l\int_0^t
  e^{-i(t-s)H}\(|u|^{2\si}u\)(s)ds=:\Phi(u)(t). 
\end{equation*}
Local existence stems from a fixed point argument in a
  ball of the space
  \begin{equation*}
      X_T = \left\{u\in C([-T,T];\widetilde\Sigma)\ ;\ u,u\nabla V,\nabla u \in
    L^{\frac{4\si+4}{d\si}}\([-T,T];L^{2\si+2}(\R^d)\) \right\},
  \end{equation*}
for $T>0$ sufficiently small. Since local in time Strichartz estimates
are available for at most quadratic potentials, the only aspect which
differs from the usual approach where $V=0$ is that  $\nabla$ does not
commute with $H$, hence does not commute with $e^{-itH}$ for $t\not
=0$. However, we have the following commutator formulas,
\begin{equation*}
  \left[ i\d_t -H,\nabla\right]=\nabla V,\quad  \left[ i\d_t -H,\nabla
  V\right]=-\nabla^2 V\cdot \nabla-\frac{1}{2}\nabla \Delta V.
\end{equation*}
Since $\nabla^2 V$ and $\nabla \Delta V$ are bounded by assumption, we
get a closed system of 
estimates. In terms of $\Phi$, we have:
\begin{align*}
  \nabla \Phi(u)(t) &= e^{-itH}\nabla u_0 -i\l \int_0^t
 e^{-i(t-s)H} \nabla \(|u|^{2\si}u\)(s)ds\\
&\quad -i\int_0^t e^{-i(t-s)H}\(\Phi(u)(s)\nabla V\)ds,\\
\Phi(u)(t)\nabla V &=  e^{-itH}\( u_0\nabla V\) -i\l \int_0^t
 e^{-i(t-s)H} \(\(|u|^{2\si}u\)(s)\nabla V\)ds\\
+i\int_0^t & e^{-i(t-s)H}\(\nabla^2V\cdot \nabla\Phi(u)(s)\)ds
+\frac{i}{2}\int_0^t e^{-i(t-s)H}\(\Phi(u)(s)\nabla\Delta V\)ds.
\end{align*}
We refer to \cite{Ca11,CazCourant} for details on the fixed point
argument. 
\smallbreak

Showing that the solution is global if the nonlinearity is not both
focusing and mass critical or super-critical, follows from the following remark,
which stems from the formal conservation of the energy, and justified
by a regularizing argument: 
\begin{align*}
  \frac{d}{dt}\(\frac{1}{2}\|\nabla
u(t)\|_{L^2}^2 
+\frac{\l}{\si+1}\|u(t)\|_{L^{2\si+2}}^{2\si+2}\)&=
-\frac{d}{dt}\int_{\R^d}V(x)|u(t,x)|^2dx \\
& = -2\RE \int_{\R^d}V(x)\bar u(t,x)\d_t u(t,x)dx\\
& = \IM  \int_{\R^d}V(x)\bar u(t,x)\Delta u(t,x)dx\\
& =-\IM \int_{\R^d}\bar u(t,x)\nabla V(x)\cdot\nabla u(t,x)dx.
\end{align*}
On the other hand, we compute
\begin{align*}
   \frac{d}{dt}\int_{\R^d}|\nabla V(x)|^2|u(t,x)|^2dx& = -2\RE
   \int_{\R^d}|\nabla V(x)|^2\bar u(t,x)\d_t u(t,x)dx\\ 
& = \IM  \int_{\R^d}|\nabla V(x)|^2\bar u(t,x)\Delta u(t,x)dx\\
& =-2\IM \int_{\R^d}\bar u(t,x)\nabla^2V(x)\nabla V(x)\cdot\nabla u(t,x)dx.
\end{align*}
Let
\begin{equation*}
 \mathcal E_\l(t)= \frac{1}{2}\|\nabla
u(t)\|_{L^2}^2 
+\frac{\l}{\si+1}\|u(t)\|_{L^{2\si+2}}^{2\si+2} +  \int_{\R^d}|\nabla
V(x)|^2|u(t,x)|^2dx. 
\end{equation*}
In view of the above computations, Cauchy-Schwarz and Young inequalities yield
\begin{equation*}
  \frac{d\mathcal E_\l}{dt}\le \(1+2\|\nabla^2V\|_{L^\infty}\)\|u(t)
  \nabla V\|_{L^2}\|\nabla u(t)\|_{L^2} \lesssim \mathcal E_0(t).
\end{equation*}
Global existence readily follows from the local
theory when $\l\ge 0$. In the mass sub-critical focusing case, one can
invoke either Gagliardo-Nirenberg inequality, or global existence at the
$L^2$-level (see e.g. \cite{CazCourant}).  
\end{proof}
To conclude, we outline that if $\nabla V$ is unbounded, then for
\eqref{eq:nls} to possess an $H^1$ local solution, one has to assume
that $u_0\in \widetilde \Sigma$. This phenomenon is geometrical, in
the sense that it is present in the linear case $\l=0$. It remains in
the nonlinear setting, in the same spirit as in \cite{CaCauchy}. We
shall therefore present the linear result, and refer to
\cite{CaCauchy} for the adaptation to the nonlinear framework of \eqref{eq:nls}. 
\begin{proposition}\label{prop:lin}
  Let $V$ be at most quadratic, with $\nabla V\not\in
  L^\infty(\R^d)$. If $u_0\in H^2(\R^d)\setminus \widetilde \Sigma$,
  then for arbitrarily small time $\tau>0$, the
  solution $u\in C([0,\tau];L^2(\R^d))$ to
  \begin{equation}
    \label{eq:lin}
    i\d_t u = Hu\quad ;\quad u_{\mid t=0}=u_0,
  \end{equation}
satisfies $\nabla u(\tau,\cdot)\not \in L^2(\R^d)$. 
\end{proposition}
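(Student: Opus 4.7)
The plan is to work in the Heisenberg picture and exploit the $L^2$-unitarity of $e^{-i\tau H}$. Setting $A(t):=e^{itH}\nabla e^{-itH}$, we have $\nabla u(\tau)=e^{-i\tau H}A(\tau)u_0$, so $\nabla u(\tau)\in L^2$ if and only if $A(\tau)u_0\in L^2$. The commutator $[\nabla,H]=\nabla V$ gives $A'(t)=-ie^{itH}(\nabla V)e^{-itH}$ and hence $A'(0)=-i\nabla V$, so a second-order Taylor expansion with integral remainder produces
\begin{equation*}
A(\tau)u_0=\nabla u_0-i\tau\,\nabla V\cdot u_0+R(\tau)u_0,\qquad R(\tau)u_0:=\int_0^\tau(\tau-s)\,A''(s)u_0\,ds,
\end{equation*}
with $A''(s)=-e^{isH}\bigl(\tfrac12\nabla\Delta V+\nabla^2V\cdot\nabla\bigr)e^{-isH}$, a unitary conjugate of a first-order differential operator whose coefficients are bounded by Assumption~\ref{hyp:V}.

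The crux is to show that $R(\tau)$ maps $H^2$ to $L^2$ for small $\tau$. Once granted, the decomposition above yields
\begin{equation*}
A(\tau)u_0-\nabla u_0+i\tau\,\nabla V\cdot u_0=R(\tau)u_0\in L^2,
\end{equation*}
and since $\nabla u_0\in L^2$ while $\nabla V\cdot u_0\notin L^2$ by the hypothesis $u_0\notin\widetilde\Sigma$, we would conclude $A(\tau)u_0\notin L^2$, which is the proposition.

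The main obstacle is the $H^2\to L^2$ boundedness of $R(\tau)$: the naive estimate $\|A''(s)u_0\|_{L^2}\lesssim\|u(s)\|_{L^2}+\|\nabla u(s)\|_{L^2}$ reintroduces $\|\nabla u(s)\|_{L^2}$, which is precisely the quantity under scrutiny. I would break this circularity by proving everything first for approximating data $u_0^n\in C_c^\infty(\R^d)\subset\widetilde\Sigma$, for which the linear analogue of Theorem~\ref{theo:new} ensures $\nabla u^n\in C([0,\tau];L^2)$ and the Taylor identity is entirely rigorous, and then invoking Fujiwara's parametrix for $e^{-itH}$ (\cite{Fujiwara79}) to represent $A(\tau)$ as a pseudodifferential operator whose symbol is the classical Hamilton flow of $i\xi$ under $\tfrac12|\xi|^2+V(x)$. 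Because $\nabla^2V$ is bounded, this flow is affine in $(x,\xi)$ modulo bounded-coefficient corrections, so $R(\tau)$ quantizes to a pseudodifferential operator of order one in $\xi$ with bounded coefficients, hence bounded $H^1\to L^2$. The decomposition then extends to all $u_0\in H^2$ by density. The complete argument can be adapted from \cite{CaCauchy}, where the same phenomenon is established in the nonlinear setting.
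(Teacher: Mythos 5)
Your reduction to the Heisenberg observable $A(t)=e^{itH}\nabla e^{-itH}$ and the expansion $A(\tau)u_0=\nabla u_0-i\tau\,\nabla V\,u_0+R(\tau)u_0$ capture the correct leading-order mechanism (the same one the paper exploits), but the step on which your whole argument rests --- that $R(\tau)$ is bounded from $H^2$ (or $H^1$) to $L^2$ --- is false, and no parametrix argument can make it true. Take $d=1$, $V(x)=x^2/2$, which satisfies the hypotheses of the proposition. The exact Heisenberg flow gives $A(\tau)=\cos\tau\,\d_x-i\sin\tau\,x$, hence
\begin{equation*}
R(\tau)=A(\tau)-\d_x+i\tau x=(\cos\tau-1)\,\d_x+i(\tau-\sin\tau)\,x .
\end{equation*}
Since $\tau-\sin\tau\neq0$ for $\tau>0$, $R(\tau)$ contains a nontrivial multiple of the multiplication operator $x=\nabla V$, which is not bounded from $H^2$ to $L^2$ (translate a fixed bump to infinity: the $H^2$ norm stays constant while $\|xu_0\|_{L^2}\to\infty$). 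Worse, for precisely the data of the proposition ($u_0\in H^2$, $\nabla V\,u_0\notin L^2$) one has $R(\tau)u_0\notin L^2$, so the decomposition ``bad term $-i\tau\nabla V u_0$ plus an $L^2$ remainder'' simply does not hold, and the density argument collapses as well, since $u_0\mapsto R(\tau)u_0$ is not $H^2\to L^2$ continuous. The flaw in the pseudodifferential heuristic is the phrase ``affine in $(x,\xi)$ modulo bounded-coefficient corrections'': the classical flow sends $\xi$ to $\xi-\int_0^\tau\nabla V(x(s))\,ds$, so the corrections beyond $\xi-\tau\nabla V(x)$ are of size $O(\tau^2)\,|\nabla V(x)|$ --- affine in $x$ means \emph{unbounded} coefficients, not bounded ones.

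What is true, and what a proof along your lines would need, is the refined statement that the unbounded part of the remainder is pointwise of size $O(\tau^2)|\nabla V|\,|u_0|$, i.e. $A(\tau)u_0=\nabla u_0-i\tau\bigl(\mathrm{Id}+O(\tau)\bigr)\nabla V\,u_0+(\text{term bounded in }L^2\text{ by }\|u_0\|_{H^2})$; then for $\tau$ small the middle term still forces $A(\tau)u_0\notin L^2$. Extracting this structure is exactly where the work lies, and it is what the paper's proof achieves by different means: it filters the oscillation with the eikonal phase $\phi$ solving $\d_t\phi+\frac12|\nabla\phi|^2+V=0$, writes $u=ae^{i\phi}$ with $a\in C([0,T];H^2)$, and proves the pointwise bound $|\nabla\phi(t,x)-t\nabla V(x)|\lesssim t^2|\nabla V(x)|$, so that the unbounded part of $\nabla u$ appears explicitly as $a\nabla\phi$ with $\nabla\phi=t(1+\O(t))\nabla V$, while all remaining terms are controlled in $L^2$ without ever asserting an unweighted $H^2\to L^2$ bound for an operator containing $\nabla V$. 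If you wish to keep the Heisenberg formulation, you must prove an analogous weighted (pointwise in $\nabla V$) control of the second-order remainder, for instance by integrating the coupled Duhamel system for $(\nabla u,\ u\nabla V)$ coming from the commutator identities used in the proof of Theorem~\ref{theo:new}, rather than the unweighted operator bound, which fails.
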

\begin{proof}
 The phenomenon related to Proposition~\ref{prop:lin} is a rotation in
 phase space: the presence of $V$ with an unbounded gradient causes
 the appearance of oscillations. To filter out these oscillations,
 introduce the eikonal equation 
\begin{equation}
  \label{eq:eikonal}
  \d_t \phi +\frac{1}{2}|\nabla \phi|^2+V=0\quad ;\quad \phi_{\mid
    t=0}=0. 
\end{equation}
It is classically solved locally in time thanks to the Hamiltonian
flow
\begin{equation*}
   \dot x(t,y)=\xi(t,y),\quad \dot \xi(t,y) = -\nabla V\(x(t,y)\),\quad
  x(0,y)=y,\quad \xi(0,y)=0. 
\end{equation*}
For $V$ at most quadratic, there exists $T>0$ such that the map $y\mapsto x(t,y)$ is
invertible for $(t,y)\in [0,T]\times\R^d$. Such a time $T$ must be
expected to be necessarily 
finite in general, due to the formation of caustics.
Then \eqref{eq:eikonal} has a unique local smooth solution $\phi\in
C^\infty([0,T]\times \R^d)$, which is at most quadratic in space,
\begin{equation*}
  \d_x^\alpha \phi \in L^\infty([0,T]\times \R^d),\quad \forall \alpha
  \in \N^d,\ |\alpha|\ge 2. 
\end{equation*}
Details can be found for instance in \cite{CaBKW}. 
Introduce $a$ given by  $u(t,x)=
a(t,x)e^{i\phi(t,x)}$. On $[0,T]\times \R^d$, \eqref{eq:lin} is
equivalent to
\begin{equation}
\label{eq:transport}
\d_t a +\nabla \phi\cdot \nabla a +\frac{1}{2}a\Delta \phi
=\frac{i}{2}\Delta a\quad ;\quad a_{\mid t=0}=u_0.   
\end{equation}
This equation is a transport equation (left hand side), plus a
skew-symmetric term (right hand side). 
Since $\phi$ is at most quadratic in space and $u_0\in H^2(\R^d)$, we
check that \eqref{eq:transport} has a
unique solution $a\in C([0,T];H^2(\R^d))$. On the other hand, since $\phi\in
C^\infty([0,T]\times \R^d)$, \eqref{eq:eikonal} implies
\begin{equation*}
 |\d_t \nabla \phi+\nabla V|=|\nabla^2 \phi \cdot \nabla \phi|\lesssim
 |\nabla \phi| \lesssim |\nabla \phi +t\nabla V|+|t\nabla V|.
\end{equation*}
Gronwall lemma yields
\begin{equation*}
  |\nabla \phi(t,x) +t\nabla V(x)|\lesssim |\nabla V(x)|\int_0^t
  se^{Cs}ds\le C_T t^2  |\nabla V(x)|,\quad t\in [0,T].
\end{equation*}
To conclude, we approximate $a$ for short time. Introduce $\tilde a$
solution to 
\begin{equation}
\label{eq:transport2}
\d_t \tilde a +\nabla \phi\cdot \nabla \tilde a +\frac{1}{2}\tilde a\Delta \phi
=0\quad ;\quad \tilde a_{\mid t=0}=u_0.   
\end{equation}
It is given explicitly by the expression (see e.g. \cite{CaBKW})
\begin{equation*}
  \tilde a (t,x) = \frac{1}{\sqrt{ J_t\(y(t,x)\)}}u_0\(y(t,x)\), 
\end{equation*}
where $y(t,x)$ is the inverse map of $y\mapsto x(t,y)$ (well-defined
on $[0,T]\times \R^d$), and $J_t$ is the Jacobi determinant
\begin{equation*}
  J_t(y) =\operatorname{det}\nabla_y x(t,y), 
\end{equation*}
which is bounded away from zero and infinity on  $[0,T]\times
\R^d$. Subtracting \eqref{eq:transport2} from \eqref{eq:transport},
multiplying by the conjugate of $a-\tilde a$ and integrating by parts, we
get
\begin{equation*}
  \frac{d}{dt}\|a(t)-\tilde a(t)\|_{L^2}^2= \IM
  \int_{\R^d}(a(t,x)-\tilde a(t,x))\Delta \bar a(t,x)dx.
\end{equation*}
Cauchy-Schwarz inequality and Gronwall lemma yield (recall that $a\in
C([0,T];H^2)$ since we have assumed $u_0\in H^2(\R^d)$)
\begin{equation*}
  \|a(t)-\tilde a(t)\|_{L^2} \le Ct \|a\|_{L^\infty([0,T];H^2(\R^d))}.
\end{equation*}
Now
\begin{equation*}
  \nabla u(t,x) = e^{i\phi(t,x)}\nabla a (t,x) + e^{i\phi(t,x)}
  a(t,x)\nabla \phi(t,x),
\end{equation*}
with  $\nabla
\phi(t,x)=t\(1+\O(t)\)\nabla V(x)$ pointwise, and $a =\tilde a+\O(t)$
in $L^2(\R^d)$, as $t\to 0$.  Therefore, for arbitrarily small
time $\tau>0$, $\nabla u(\tau,\cdot)\not \in L^2(\R^d)$, hence the
result. 
\end{proof}
\begin{remark}
In the case  $V(x)=\<x\>^k$, $k>2$, \eqref{eq:equiv} shows that
$ \widetilde \Sigma\subsetneq B^1=\sqrt H\subsetneq \Sigma$. In view 
of Proposition~\ref{prop:strichartz}, and since the proof of
Proposition~\ref{prop:lin} heavily relies on the fact that $V$ is at
most quadratic, this suggests that 
for super-quadratic potentials, the weakest possible weight in space
at the $H^1$ level of regularity
corresponds to $\sqrt H$, that is, the property $u_0\sqrt V \in L^2(\R^d)$ (as in
Theorems~\ref{theo:BCM} and \ref{theo:yajima}).    
\end{remark}

\subsubsection*{Acknowledgements}
The author is grateful to Prof. Claude Zuily for several remarks on
this paper. 
\bibliographystyle{siam}

\bibliography{../../carles}

\end{document}